\newcommand{\orcid}[1]{\href{https://orcid.org/#1}{\includegraphics[height=0.8em]{orcid.png}}}
\newtheorem{theorem}{Theorem}
\newtheorem{lemma}[theorem]{Lemma}
\newtheorem{definition}[theorem]{Definition}
\newtheorem{example}[theorem]{Example}
\newtheorem{remark}[theorem]{Remark}
\newtheorem{conjectura}{Conjecture}
\newenvironment{proof}[1][Proof]{\noindent\textbf{#1\,} }{\hfill \rule{0.5em}{0.5em}\medskip}
\title{Interplay between $\mathbb{Z}_2$-gradings and automorphisms in the Grassmann algebra: a survey
}
\author{
	Alan Guimar\~aes\\
	Departamento de Matemática\\
	Universidade Federal do Rio Grande do Norte, Natal, RN, Brazil\\
	alan.guimaraes@ufrn.br }
\begin{document}
	\maketitle
	
	\begin{abstract} 
		
		Let $F$ be a field of characteristic different from two, and let $E$ be the Grassmann algebra of an infinite-dimensional $F$-vector space $L$. In this paper, we survey recent results concerning automorphisms of order two of $E$ and the corresponding induced superalgebra structures. We also present concrete examples of non-homogeneous automorphisms of $E$, as discussed in \cite{agpkauto, anagomes}. The paper concludes with a conjecture regarding the classification of the superalgebra structures of $E$.

		\noindent \textbf{Keywords:} Grassmann algebra, Automorphism, Superalgebras.
		
		\vspace{0.5cm}
		\noindent \textit{2020 AMS MSC:} 15A75, 16W50, 16W55, 16R99
		
	\end{abstract}

	\section{Introduction}
	
	Let $F$ be a field of characteristic different from two. Let $L$ be a vector space over $F$ with basis $e_1, e_2, \ldots$. The infinite-dimensional Grassmann algebra $E$ of $L$ over $F$ has a basis $B_E$ consisting of $1$ and all monomials 
	\[
	e_{i_1} e_{i_2} \cdots e_{i_k}
	\]
	where $i_1 < i_2 < \cdots < i_k$, $k \geq 1$. The multiplication in $E$ is induced by the rule
	\[
	e_i e_j = - e_j e_i
	\]
	for all $i$ and $j$; in particular, $e_i^2 = 0$ for each $i$.
	
	The Grassmann algebra is the most natural example of a superalgebra and is widely used in various areas of Mathematics as well as in Theoretical Physics. Moreover, $E$ is one of the most important polynomial identity (PI) algebras. Its polynomial identities were first described by Latyshev \cite{latyshev} and later studied in detail by Krakowski and Regev \cite{KR}. It was observed by P.~M.~Cohn that the Grassmann algebra satisfies no standard identities. Recall that the standard polynomial $s_n$ is the alternating sum of all monomials obtained by permuting the variables in $x_1 x_2 \cdots x_n$. The true significance of the Grassmann algebra in PI theory was revealed through the work of A.~Kemer in the mid-1980s. Kemer proved that every associative PI-algebra over a field of characteristic zero is PI-equivalent to the Grassmann envelope of a finite-dimensional associative superalgebra; see \cite{Kemer1, Kemer2}.
	
	The description of gradings on an algebra by a group is an important problem in the structure theory of graded rings and its applications. The gradings on ample classes of algebras have already been classified; these include the full matrix algebras, the upper triangular matrix algebras and block-triangular matrix algebras. The gradings by an arbitrary group for the algebras of upper triangular matrices over an arbitrary field were classified by A. Valenti and M. Zaicev, see \cite{VZ}. A complete classification of the group gradings on full matrix algebras and block-triangular matrix algebras has been given under some restrictions on the field, which is assumed to be algebraically closed, and on the grading group, see \cite{BZ}, \cite{DGK}. In \cite{AGDD}, the authors classified the gradings on matrix algebras of prime order, under mild assumptions on the ground field.

	The graded polynomial identities satisfied by an algebra are often ``easier" to describe than the ordinary ones. Indeed, polynomial identities of associative algebras are known in very few cases, including the Grassmann algebra $E$ (over any field), the $2 \times 2$ matrix algebra \cite{second-order D, Yu Raz} (in characteristic 0), \cite{plamen order 2} (over infinite fields of characteristic different from 
	2), and \cite{malcev kuzmin} (over a finite field); the upper triangular matrix algebras \cite{malcev only, chiripov}, and the algebra $E\otimes E$ \cite{popov} in 
	characteristic 0. On the other hand, gradings on matrix algebras are well understood \cite{Bah}, and their graded identities are thoroughly describe. Gradings and graded identities on upper triangular matrix algebras have also been classified \cite{VPV, VZ}. Furthermore, graded identities for natural gradings on other important classes of algebras have been studied extensively.
	
	The Grassmann algebra admits a natural grading by the cyclic group $\mathbb{Z}_2$. Its structure from the viewpoint of PI-theory is well known and straightforward to deduce; see, for example, \cite{GMZ}. In recent years, many papers have presented results on gradings and their graded identities for the Grassmann algebra. In \cite{disil}, the authors studied all homogeneous superalgebra structures defined on the Grassmann algebra. Their $\mathbb{Z}_2$-graded polynomial identities were also described in \cite{disil, centroneP, LFG}.
	
	A common approach to studying $\mathbb{Z}_2$-gradings of an algebra $A$ is to investigate automorphisms of order two of $A$. If $\varphi \in \operatorname{Aut}(A)$ is an automorphism of order at most two, i.e., $\varphi^2 = \mathrm{id}$, then $A$ decomposes as a $\mathbb{Z}_2$-grading
	\[
	A = A_{0,\varphi} \oplus A_{1,\varphi},
	\]
	where $A_{0,\varphi}$ and $A_{1,\varphi}$ are the eigenspaces of $\varphi$ corresponding to eigenvalues $1$ and $-1$, respectively. Conversely, to each $\mathbb{Z}_2$-grading on $A$ one can associate an automorphism of order at most two defined by
	\[
	\varphi(a_0 + a_1) = a_0 - a_1
	\]
	for every $a_i \in A_i$, $i=0,1$. This duality between group gradings and group actions is well documented; see, for example, \cite{GMZ} for a general discussion. It is worth noting that, in general, the description of automorphisms of algebras constitutes a challenging problem in ring theory. In \cite{makar}, Makar-Limanov provided a description of the group of automorphisms of finite-dimensional Grassmann algebras. In \cite{AK}, the author studies an algorithm for computing automorphisms of the infinite-dimensional Grassmann algebra. We point out that, to this day, no complete description of the group of automorphisms of the infinite-dimensional Grassmann algebra is known. In \cite{agpkauto, anagomes}, the authors study superalgebra structures on $E$ by means of concrete automorphisms of order two acting on $E$. They investigate polynomial identities, PI-equivalence, and the isomorphism problem for superalgebras of the Grassmann algebra. In particular, the reference \cite{anagomes} provides a characterization of the natural $\mathbb{Z}_2$-grading of $E$ in terms of its $\mathbb{Z}_2$-graded polynomial identities.
	
	Beyond the investigation of the $\mathbb{Z}_2$-gradings on $E$, numerous works have explored gradings on $E$ by other groups. We cite here the papers \cite{centroneG, disilplamen, APzgrad, AGuim, ABC, ACP}. The $\ast$-identities of any involution algebra structure on $E$ were described in \cite{CDD}. The differential identities of 
	$E$ with respect to the action of a finite-dimensional Lie algebra  of inner derivations were studied, and a set of generators for the ideal of differential identities was computed in \cite{Rizzo}. This highlights that the Grassmann algebra has also been investigated in the non-associative setting.

	In this paper, we survey recent results on $\mathbb{Z}_2$-gradings and automorphisms of order two of the Grassmann algebra, as presented in \cite{agpkauto, anagomes}. In Section~\ref{prmilinaries}, we introduce the definitions and notation necessary for understanding the paper. In Section~\ref{main}, we present strategies for constructing examples of automorphisms of the Grassmann algebra and the corresponding $\mathbb{Z}_2$-gradings on $E$. We conclude by stating a conjecture related to the classification of these structures.

	\section{Preliminaries}\label{prmilinaries}
	
	In this section, we present the concepts, notation, and preliminary results that will be essential for understanding the remainder of the paper.
	
	\subsection{$\mathbb{Z}_2$-gradings and identities}
	
	\begin{definition}
		Let $F$ be a field and $A$ be a unital associative $F$-algebra. We say that $A$ is a \emph{$\mathbb{Z}_2$-graded algebra} (or \emph{superalgebra}) if
		\[
		A = A_0 \oplus A_1
		\]
		where $A_0, A_1$ are $F$-subspaces such that
		\[
		A_i A_j \subseteq A_{i+j}
		\]
		for all $i, j \in \mathbb{Z}_2$. The subspace $A_0$ is called the \emph{even component} and $A_1$ the \emph{odd component} of the grading. A non-zero vector $a \in A_i$ is called \emph{homogeneous of degree} $i$, and we denote this by $\|a\| = i$. A vector subspace (subalgebra, ideal) $W \subseteq A$ is said to be \emph{$\mathbb{Z}_2$-graded} if
		\[
		W = (W \cap A_0) \oplus (W \cap A_1).
		\]
	\end{definition}
	
	\begin{example}
		Every algebra $A$ admits the \emph{trivial} $\mathbb{Z}_2$-grading
		\[
		A = A_0 \oplus A_1,
		\]
		where $A_0 = A$ and $A_1 = \{0\}$.
	\end{example}
	
	\begin{example}
		The matrix algebra $M_2(F)$ admits the $\mathbb{Z}_2$-grading
		\[
		M_2(F) = A_0 \oplus A_1,
		\]
		where
		\[
		A_0 = \mathrm{span}\{e_{11}, e_{22}\} \quad \text{and} \quad A_1 = \mathrm{span}\{e_{12}, e_{21}\}.
		\]
	\end{example}
	
	\begin{example}
		In the Grassmann algebra $E$, consider the subspaces $E_{(0)}$, defined as the span of $1$ and all monomials of even length, and $E_{(1)}$, defined as the span of all monomials of odd length.

		Then
		\[
		E = E_{(0)} \oplus E_{(1)},
		\]
		and it satisfies
		\[
		E_{(0)} E_{(0)} + E_{(1)} E_{(1)} \subseteq E_{(0)}, \quad E_{(0)} E_{(1)} + E_{(1)} E_{(0)} \subseteq E_{(1)}.
		\]
		Hence, $E = E_{(0)} \oplus E_{(1)}$ is a $\mathbb{Z}_2$-grading, which we denote by $E_{can}$. In this grading, we have that $e_i$ is homogeneous of degree $1$, for all $i$.
	\end{example}
	
	\medskip
	
	If $A$ and $B$ are superalgebras, a homomorphism $f : A \to B$ is \emph{$\mathbb{Z}_2$-graded} if
	\[
	f(A_i) \subseteq B_i
	\]
	for all $i \in \mathbb{Z}_2$. When there exists a bijective $\mathbb{Z}_2$-graded homomorphism between $A$ and $B$, we say that $A$ and $B$ are \emph{$\mathbb{Z}_2$-isomorphic}.
	
	\medskip
	
	\noindent
	One defines a free object in the category of superalgebras by considering the free associative $F$-algebra generated by the disjoint union of two countable sets of variables, denoted by $Y$ and $Z$. We assume that the elements of $Y$ are of degree zero and the elements of $Z$ are of degree one. This algebra is denoted by
	\[
	F\langle Y \cup Z \rangle.
	\]
	Its even part is the vector space spanned by all monomials containing an even number of variables from $Z$, and the odd part is spanned by all monomials with an odd number of variables from $Z$. It is straightforward to verify that $F\langle Y \cup Z \rangle$ is the free associative superalgebra on the graded set $Y \cup Z$, in the sense that for any superalgebra $A$ and any map
	\[
	\varphi : Y \cup Z \to A
	\]
	satisfying $\varphi(Y) \subseteq A_0$ and $\varphi(Z) \subseteq A_1$, there exists a unique $\mathbb{Z}_2$-graded homomorphism
	\[
	F\langle Y \cup Z \rangle \to A
	\]
	extending $\varphi$. For $x \in Y \cup Z$, we denote by $\|x\|$ its $\mathbb{Z}_2$-degree. Given a subset $S \subseteq F\langle Y \cup Z \rangle$, the \emph{$T_2$-ideal} generated by $S$ is denoted by $\langle S \rangle_{T_2}$.
	
	\medskip
	
	\noindent
	A polynomial
	\[
	f(y_1, \ldots, y_l, z_1, \ldots, z_m) \in F\langle Y \cup Z \rangle
	\]
	is a \emph{$\mathbb{Z}_2$-graded polynomial identity} of a superalgebra $A$ if
	\[
	f(a_1, \ldots, a_l, b_1, \ldots, b_m) = 0
	\]
	for all substitutions with $\|a_i\| = 0$ and $\|b_j\| = 1$. The set of all $\mathbb{Z}_2$-graded polynomial identities of $A$,
	denoted by $T_2(A)$, is a homogeneous ideal of $F\langle Y \cup Z \rangle$ called the \emph{$T_2$-ideal} of $A$.

	\subsection{$\mathbb{Z}_2$-gradings and automorphisms: a duality}
	
	Let $A$ be an associative $F$-algebra. There exists a natural duality between $\mathbb{Z}_2$-gradings and automorphisms of order 
	$\leq 2$ on $A$. The duality is defined as follows.
	
	If $\varphi \in \operatorname{Aut}(A)$ is such that $\varphi^2 = 1$ then $A_{\varphi} = A_{0,\varphi} \oplus A_{1,\varphi}$ where the homogeneous 
	components are the eigenspaces corresponding to the eigenvalues $1$ and $-1$ of $\varphi$, respectively. The decomposition in a direct sum 
	of the eigenspaces exists since $F$ is of characteristic different from $2$.
	
	The general facts about duality between gradings and actions of groups can be found, for example, in \cite{GMZ}. 
	We observe that the homogeneous $\mathbb{Z}_2$-gradings on $E$ correspond to the automorphisms on $E$ satisfying
	\[
	\varphi(e_i) = \pm e_i.
	\]
	
	If $\varphi \in \operatorname{Aut}(E)$ with $\varphi^2 = id$ we observe that
	\[
	e_i = \frac{e_i + \varphi(e_i)}{2} + \frac{e_i - \varphi(e_i)}{2}, \quad \text{for } i \in \mathbb{N}.
	\]
	Setting $a_i = e_i + \varphi(e_i)/2$ we have:
	\begin{itemize}
		\item $\varphi(e_i) = -e_i + 2a_i$,
		\item $\varphi(a_i) = a_i$, that is, $a_i$ is of degree zero in the $\mathbb{Z}_2$-grading $E_\varphi$,
		\item $\varphi(e_i - a_i) = -(e_i - a_i)$, that is, $e_i - a_i$ is of degree $1$ in the $\mathbb{Z}_2$-grading $E_\varphi$.
	\end{itemize}
	
	\begin{definition}
		Let $\varphi \in \operatorname{Aut}(E)$. We say that $\varphi$ is of \emph{canonical type} if $\varphi(e_{i})\in E_{(1)}$, for all $i$.
	\end{definition}

	If $\varphi$ is an automorphism of order $2$ on $E$, we have that $\varphi$ is of canonical type if and only if $a_i \in E_{(1)}$ for all $i$.

	Let us fix a basis $\beta = \{e_1, e_2, \ldots, e_n, \ldots\}$ of the vector space $L$ and an automorphism 
	$\varphi \in \operatorname{Aut}(E)$ such that $\varphi^2 = id$. Then $\varphi$, as a linear transformation, has eigenvalues $1$ and $-1$ only, 
	and moreover, there exists a basis of the vector space $E$ consisting of eigenvectors. (It is well known from elementary Linear Algebra 
	that this fact does not depend on the dimension of the vector space as long as the characteristic of $F$ is different from $2$.) Then 
	$E = E(1) \oplus E(-1)$ where $E(t)$ is the eigenspace for the eigenvalue $t$ of the linear transformation $\varphi$. One considers the 
	intersections $L(t) = L \cap E(t)$, $t = \pm 1$. Changing the basis $\beta$, if necessary, one may assume that $L(t)$ is the span of 
	$\beta \cap L(t)$. Clearly this change of basis gives rise to a homogeneous automorphism of $E$ and we can take the composition of it 
	and then $\varphi$. We shall assume that such a change of basis has been done.
	
	Denote 
	\[
	I_\beta = \{ n \in \mathbb{N} \mid \varphi(e_n) = \pm e_n \}.
	\]
	We shall distinguish the following four possibilities:
	\begin{enumerate}
		\item $I_\beta = \mathbb{N}$.
		\item $I_\beta \neq \mathbb{N}$ is infinite.
		\item $I_\beta$ is finite and nonempty.
		\item $I_\gamma = \emptyset$ for every linear basis $\gamma$ of $L$.
	\end{enumerate}
	
	We shall call these automorphisms (and also the corresponding $\mathbb{Z}_2$-gradings), automorphisms (or gradings) of type $1$, $2$, $3$, 
	and $4$, respectively.
	
	The automorphisms of type 1 induce $\mathbb{Z}_2$-gradings on $E$ in which all generators of $E$ are homogeneous. Such structures are called homogeneous $\mathbb{Z}_2$-gradings on $E$. The corresponding graded identities were completely studied in \cite{disil, centroneP, LFG}.

	We conclude this section with the following lemma.
	
	\begin{lemma}\label{v=0}
		Let $\varphi$ be an automorphism of order two of $E$. Then $\varphi$ is of type~4 if and only if, for every $v \in L$ such that $\varphi(v) = \pm v$, one has $v = 0$.
	\end{lemma}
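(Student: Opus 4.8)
The plan is to reduce the statement to the elementary observation that, for a basis $\gamma = \{u_1, u_2, \ldots\}$ of $L$, the index set $I_\gamma$ is nonempty precisely when one of the basis vectors is an eigenvector of $\varphi$ with eigenvalue $\pm 1$. Since type~4 is defined by the condition that $I_\gamma = \emptyset$ for \emph{every} basis $\gamma$, this will translate directly into the statement that $\varphi$ possesses no nonzero eigenvector (for the eigenvalues $+1$ and $-1$) lying in $L$, which is exactly the asserted condition.

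First I would establish the forward direction. Assuming $\varphi$ is of type~4, suppose for contradiction that some nonzero $v \in L$ satisfies $\varphi(v) = \pm v$. Since $v \neq 0$, the set $\{v\}$ is linearly independent in $L$ and can therefore be extended to a basis $\gamma = \{u_1, u_2, \ldots\}$ of $L$ in which $v$ occurs as a basis vector, say $v = u_{n_0}$. Then $\varphi(u_{n_0}) = \pm u_{n_0}$, so $n_0 \in I_\gamma$ and $I_\gamma \neq \emptyset$, contradicting type~4. Hence every such $v$ must be zero.

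For the converse I would argue directly at the level of definitions. Suppose every $v \in L$ with $\varphi(v) = \pm v$ is zero, and let $\gamma = \{u_1, u_2, \ldots\}$ be an arbitrary basis of $L$. If some $n$ belonged to $I_\gamma$, then $u_n \in L$ would be a nonzero vector with $\varphi(u_n) = \pm u_n$, contradicting the hypothesis. Therefore $I_\gamma = \emptyset$ for every basis $\gamma$, and $\varphi$ is of type~4.

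The whole argument is a careful unwinding of the definition of type~4, so I do not expect any serious obstacle. The single point that deserves explicit mention is the extension of a prescribed nonzero vector to a full basis of $L$; in the infinite-dimensional setting this is the standard fact that any linearly independent subset extends to a basis, which I would invoke directly.
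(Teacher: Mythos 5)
Your argument is correct and is essentially the same as the paper's own proof: both directions reduce to the observation that a nonzero eigenvector $v \in L$ can be extended to a basis $\gamma$ with $I_\gamma \neq \emptyset$, and conversely any element of $I_\gamma$ yields such a nonzero eigenvector. You merely spell out the converse more explicitly than the paper, which dismisses it with ``the same argument.''
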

	\begin{proof}
		Assume that $\varphi$ is of type~ 4 and let $v \in L$ with $\varphi(v) = \pm v$. 
		If $v \neq 0$, choose a basis $\gamma$ of $L$ such that $v \in \gamma$. 
		Then $I_{\gamma} \neq \emptyset$, a contradiction. 
		The converse follows by the same argument.
	\end{proof}

	\section{Automorphisms of order two of $E$}\label{main}
	
	From this point on, our goal is to survey recent developments regarding automorphisms of order two and the corresponding $\mathbb{Z}_2$-gradings of the infinite-dimensional Grassmann algebra.
	
	Let $X=\{e_{1}, \ldots, e_{n},\ldots\}$. For each map $\lambda : X\to E$, we can define the linear transformation $\varphi: E\to E$ by
	\begin{equation}\label{estend to a linear trans}
		\varphi(e_{i_1}\ldots e_{i_n})=\lambda(e_{i_1})\ldots \lambda(e_{i_n}),
	\end{equation}
	for all $n\in\mathbb{N}$.
	
	We start with the next lemma.
	
	\begin{lemma}\label{linear trans for end}
		The linear transformation $\varphi$ is an endomorphism of $E$ if and only if 
		\[
		\lambda(e_{i})\lambda(e_{j}) + \lambda(e_{j})\lambda(e_{i}) = 0, \quad \text{for all } i, j.
		\]
	\end{lemma}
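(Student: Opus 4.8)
The plan is to prove the two implications separately, with the converse carrying essentially all of the content. Throughout I record that $\varphi$ agrees with $\lambda$ on length-one monomials, so that $\varphi(e_i)=\lambda(e_i)$, and that the empty-product convention $\varphi(1)=1$ renders $\varphi$ unital. For the forward direction, assume $\varphi$ is an endomorphism. Applying multiplicativity to the defining relation $e_j e_i=-e_i e_j$ for $i<j$ gives
\[
\lambda(e_j)\lambda(e_i)=\varphi(e_j)\varphi(e_i)=\varphi(e_j e_i)=-\varphi(e_i e_j)=-\lambda(e_i)\lambda(e_j),
\]
which is the asserted identity for $i\neq j$; the case $i=j$ follows by applying $\varphi$ to $e_i^2=0$, yielding $\lambda(e_i)^2=0$. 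Hence the relation holds for all $i,j$, and this direction is immediate.

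For the converse, suppose $\lambda(e_i)\lambda(e_j)+\lambda(e_j)\lambda(e_i)=0$ for all $i,j$. Since $\operatorname{char}F\neq 2$, the case $i=j$ gives $\lambda(e_i)^2=0$. By linearity of $\varphi$ and bilinearity of the product, it suffices to verify $\varphi(uv)=\varphi(u)\varphi(v)$ for basis monomials $u=e_{i_1}\cdots e_{i_p}$ and $v=e_{j_1}\cdots e_{j_q}$ written in increasing order. The key observation is that the elements $\lambda(e_1),\lambda(e_2),\ldots$ pairwise anticommute and square to zero, so they obey in $E$ exactly the same rewriting rules as the generators $e_1,e_2,\ldots$ themselves.

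I would then split on the index multiset. If the index sets of $u$ and $v$ are disjoint, sorting the concatenation gives $uv=\epsilon\,e_{k_1}\cdots e_{k_{p+q}}$ with $k_1<\cdots<k_{p+q}$ and a sign $\epsilon=\pm1$ determined by the sorting permutation, so $\varphi(uv)=\epsilon\,\lambda(e_{k_1})\cdots\lambda(e_{k_{p+q}})$; applying the same permutation to $\varphi(u)\varphi(v)=\lambda(e_{i_1})\cdots\lambda(e_{i_p})\lambda(e_{j_1})\cdots\lambda(e_{j_q})$ produces the identical sign $\epsilon$, so the two sides coincide. If instead some index appears in both $u$ and $v$, then $uv=0$ in $E$ because two equal generators collide, whence $\varphi(uv)=0$; on the other side, anticommuting the $\lambda$'s brings the two equal factors together, and $\lambda(e_k)^2=0$ forces $\varphi(u)\varphi(v)=0$ as well.

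The main obstacle is the sign bookkeeping in the disjoint-index case: one must argue cleanly that reordering a product of pairwise anticommuting elements multiplies it by the sign of the reordering permutation, and then apply this single fact to both the $e_i$ and the $\lambda(e_i)$, so that the signs on the two sides are manifestly the same rather than recomputed independently. I would isolate this as a short auxiliary remark before running the case analysis, which keeps the rest of the verification routine.
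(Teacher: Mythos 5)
Your proposal is correct and follows essentially the same route as the paper's proof: a case split on whether the two monomials share an index, with a sorting permutation supplying the same sign on both sides in the disjoint case. You are in fact slightly more thorough than the paper, which omits the (easy) forward direction and treats the collision case $xy=0$ as immediate, whereas you correctly note that showing $\varphi(x)\varphi(y)=0$ there requires anticommuting the $\lambda(e_k)$'s and using $\lambda(e_k)^2=0$ (obtained from the $i=j$ relation since $\operatorname{char}F\neq 2$).
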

	
	\begin{proof}
		We only need to prove the converse.  
		Let $x = e_{i_1}\cdots e_{i_m}$ and $y = e_{j_1}\cdots e_{j_n}$ be monomials in $B_E$. If $xy = 0$, it is clear that $\varphi(xy) = 0 = \varphi(x)\varphi(y)$.  
		
		Now assume $xy \neq 0$. Then there exists a permutation $\sigma$ such that
		\[
		\sigma(i_1) < \cdots < \sigma(i_m) < \sigma(j_1) < \cdots < \sigma(j_n).
		\] 
		In this case, we have
		\[
		\varphi(xy) 
		= \varphi\!\left((-1)^{\sigma} e_{\sigma(i_1)} \cdots e_{\sigma(i_m)} e_{\sigma(j_1)} \cdots e_{\sigma(j_n)}\right) 
		= (-1)^{\sigma} e_{\sigma(i_1)} \cdots e_{\sigma(i_m)} e_{\sigma(j_1)} \cdots e_{\sigma(j_n)},
		\]
		where $(-1)^{\sigma}$ denotes the sign of $\sigma$.  
		
		On the other hand,
		\[
		\varphi(x)\varphi(y) 
		= \big(\lambda(e_{i_1}) \cdots \lambda(e_{i_m})\big)\big(\lambda(e_{j_1}) \cdots \lambda(e_{j_n})\big) 
		= (-1)^{\sigma} e_{\sigma(i_1)} \cdots e_{\sigma(i_m)} e_{\sigma(j_1)} \cdots e_{\sigma(j_n)}.
		\]
		
		Since $\varphi$ is linear, the result follows.
	\end{proof}

	Therefore, it follows from Lemma~\ref{linear trans for end} that, to construct an automorphism 
	$\varphi$ of $E$ of order two, we must define $\varphi$ so that
	\[
	\varphi(e_{i})\varphi(e_{j}) + \varphi(e_{j})\varphi(e_{i}) = 0\]
	and 
	\[\varphi^{2}(e_{i}) = e_{i},
	\]
	for all $i$ and $j$. From this point onward, we will adopt this approach.

	\subsection{Concrete automorphisms of type 1}
	
	To define a homogeneous $\mathbb{Z}_{2}$-grading on $E$, it is sufficient to assign degrees to the elements of a basis of $L$. More specifically, according to \cite{disil}, for $k \in \mathbb{N}_0 = \mathbb{N} \cup \{0\}$, the possible assignments are:
	\[
	\|e_{i}\|_{k}=
	\begin{cases} 
		0, & \text{if } i=1,\ldots,k, \\[2pt] 
		1, & \text{otherwise},
	\end{cases}
	\]
	\[
	\|e_{i}\|_{k^\ast}=
	\begin{cases} 
		1, & \text{if } i=1,\ldots,k, \\[2pt] 
		0, & \text{otherwise},
	\end{cases}
	\]
	and
	\[
	\|e_{i}\|_{\infty}=
	\begin{cases} 
		0, & \text{if $i$ is even}, \\[2pt] 
		1, & \text{otherwise}.
	\end{cases}
	\]
	The degree of a monomial $e_{i_1}e_{i_2}\cdots e_{i_t}$ is given by
	\[
	\|e_{i_1}e_{i_2}\cdots e_{i_t}\| 
	= \|e_{i_1}\| + \|e_{i_2}\| + \cdots + \|e_{i_t}\|, 
	\]
	where the sum is taken in $\mathbb{Z}_{2}$. These gradings are denoted by $E_{k}$, $E_{k^\ast}$, and $E_{\infty}$, respectively.  
	
	When $\|e_{i}\| = 1$ for all $i$, we recover the canonical structure $E_{can}$. Hence, up to isomorphism, these are precisely all the homogeneous superalgebra structures on $E$.

	Moreover, every homogeneous $\mathbb{Z}_{2}$-grading on $E$ corresponds to an automorphism $\varphi$ of $E$ satisfying, for a suitable basis of $E$,
	\[
	\varphi(e_i) = \pm e_i.
	\]
	In particular, the automorphism $\varphi(e_{i}) = -e_{i}$ for all $i$ induces the canonical superalgebra $E_{can}$, while $\varphi(e_{i}) = e_{i}$ induces the trivial superalgebra of $E$. We recall that the $\mathbb{Z}_2$-graded identities for all homogeneous superalgebras of $E$ were described in \cite{disil, centroneP, LFG}.

	\subsection{Concrete automorphisms of type 2}

	In this subsection we construct certain automorphisms of type~2. We start with the next remark.

	\begin{remark}
		In \cite{makar}, the author provides a classification of the automorphism group of finite-dimensional Grassmann algebras. 
		A natural idea to obtain automorphisms of type~2 of $E$ is the following. 
		Let $\varphi$ be an automorphism of the finite-dimensional Grassmann algebra generated by $e_{1},\ldots,e_{n}$, for some $n$. 
		We then define a map $\psi \colon \{e_{1}, e_{2}, \ldots\} \to E$ by
		\[
		\psi(e_{i}) =
		\begin{cases}
			\varphi(e_{i}), & \text{if } i \leq n, \\[4pt]
			\pm e_{i}, & \text{if } i > n .
		\end{cases}
		\]
		
		We extend $\psi$ linearly to $E$. 
		It is straightforward to check that $\psi$ is an automorphism of $E$ if and only if $\varphi(e_{i})$ is a linear combination of monomials of odd length for all $1 \leq i \leq n$. 
		Moreover, in this case, if $\varphi$ has order two, then so does $\psi$. 
		Next, we provide a more general approach to obtain automorphisms of type~2.
	\end{remark}

	Let $I \subset \mathbb{N}$ be an infinite set such that $I \neq \mathbb{N}$, 
	and define the map $\varphi(e_i) = \pm e_i$ for each $i \in I$. 
	We then set
	\[
	I^+ = \{i \in I \mid \varphi(e_i) = e_i\}, \quad 
	I^- = \{i \in I \mid \varphi(e_i) = -e_i\}, \quad
	J = \{j \in \mathbb{N} \mid j \notin I\}.
	\]
	Note that $I = I^+ \cup I^-$ is a disjoint union.  
	For each $j \in J$, we construct elements $d_j \in E$ satisfying:
	\begin{enumerate}
		\item[(1)] $d_j \in E_{(1)}$;
		\item[(2)] every monomial in $d_j$ is a product of generators whose indices lie in $I$;
		\item[(3)] every monomial in $d_j$ contains an even number of factors from $I^-$.
	\end{enumerate}
	
	We extend the action of $\varphi$ to all $e_i$ by:
	\[
	\varphi(e_i) = 
	\begin{cases}
		e_i, & \text{if } i \in I^+, \\[2pt]
		-e_i, & \text{if } i \in I^-, \\[2pt]
		-e_i + d_i, & \text{if } i \in J.
	\end{cases}
	\]
	As in Equality (\ref{estend to a linear trans}), we extend $\varphi$ to a linear transformation of $E$.
	
	The following observations hold:
	\begin{itemize}
		\item Condition~(1) ensures that 
		$\varphi(e_{i})\varphi(e_{j}) + \varphi(e_{j})\varphi(e_{i}) = 0$ 
		for all $i, j$. By Lemma~\ref{linear trans for end}, it follows that $\varphi$ is an endomorphism of $E$.
		\item Conditions~(2) and~(3) imply that $\varphi(d_j) = d_j$ for every $j \in J$.
	\end{itemize}
	
	We claim that $\varphi$ is an automorphism of order at most $2$. Indeed, 
	if $i \in I$ we have $\varphi^2(e_i) = e_i$.  
	If $j \in J$, then
	\[
	\varphi^2(e_j) = \varphi(-e_j + d_j) 
	= -(-e_j + d_j) + d_j 
	= e_j.
	\]
	Thus, $\varphi$ is an automorphism of $E$ of order $\leq 2$.  
	Let $E_\varphi = E_{0,\varphi} \oplus E_{1,\varphi}$ be the superalgebra determined by $\varphi$.  
	Its homogeneous components satisfy
	
	\[E_{0,\varphi} \supset \operatorname{span}_F\{e_i, d_j \mid i \in I^+,\, j \in J\}\] 
	and
	\[E_{1,\varphi} \supset \operatorname{span}_F\{e_i, e_j - d_j \mid i \in I^-,\, j \in J\}.\]

	\begin{example}
		Let $I = I^+ = \{2, 3, 4, \ldots\}$ and define $\varphi$ by:
		\[
		\varphi(e_i) =
		\begin{cases}
			e_i, & \text{if } i \neq 1, \\[2pt]
			-e_1 + e_2e_3e_4, & \text{if } i = 1.
		\end{cases}
		\]
		Then $\varphi$ is an automorphism of type~2.
	\end{example}
	
	We refer to the above procedure for obtaining an automorphism on $E$ as \textbf{method~A}.  
	Using method~A, we can construct numerous examples of type~2 automorphisms of the Grassmann algebra.

	\subsection{Concrete automorphisms of type~3}
	
	We now present a method for constructing automorphisms of $E$ of type~3.  
	We will see that such automorphisms need not necessarily be of canonical type.
	
	Let $t$ and $k$ be integers such that $t$ is odd and $k \geq 0$.  
	Set
	\[
	I = \{1, \ldots, k, k+1, \ldots, k+t\}.
	\]
	For each $i \in I$, define $\varphi(e_{i}) = \pm e_{i}$ so that
	\[
	I^{+} = \{1, \ldots, k\}, 
	\quad 
	I^{-} = \{k+1, \ldots, k+t\}.
	\]
	For $n > k+t$, set
	\[
	\varphi(e_{n}) = -e_{n} + \lambda_{n}e_{1} \cdots e_{k} \, e_{k+1} \cdots e_{k+t} \, e_{n},
	\]
	where $\lambda_{n}\neq 0$. Thus, the action of $\varphi$ on the generators is
	\[
	\varphi(e_{n}) =
	\begin{cases}
		e_{n}, & \text{if } 1 \leq n \leq k, \\[2pt]
		-e_{n}, & \text{if } k+1 \leq n \leq k+t, \\[2pt]
		-e_{n} + \lambda_{n}e_{1} \cdots e_{k} e_{k+1} \cdots e_{k+t} e_{n}, & \text{if } n > k+t.
	\end{cases}
	\]
	
	We observe:
	\begin{itemize}
		\item For all $i,j$, we have $\varphi(e_{i})\varphi(e_{j}) + \varphi(e_{j})\varphi(e_{i}) = 0$.  
		By Lemma~\ref{linear trans for end}, it follows that $\varphi$ is an endomorphism of $E$.
		\item If $n > k+t$, then
		\begin{align*}
			\varphi^{2}(e_{n}) 
			&= \varphi\big(-e_{n} + \lambda_{n}e_{1} \cdots e_{k} e_{k+1} \cdots e_{k+t} e_{n}\big) \\
			&= -\big(-e_{n} + \lambda_{n}e_{1} \cdots e_{k} e_{k+1} \cdots e_{k+t} e_{n}\big) \\
			&\quad + \lambda_{n}e_{1} \cdots e_{k} (-1)^{t} e_{k+1} \cdots e_{k+t} \big(-e_{n} + \lambda_{n}e_{1} \cdots e_{k} e_{k+1} \cdots e_{k+t} e_{n}\big) \\
			&= e_{n}.
		\end{align*}
	\end{itemize}
	
	Hence $\varphi$ is an automorphism of $E$ of order $2$.  
	The induced superalgebra $E_{\varphi} = E_{0,\varphi} \oplus E_{1,\varphi}$ has homogeneous components satisfying
	
	\[E_{0,\varphi} \supset \operatorname{span}_{F}\{e_{1}, \ldots, e_{k}\}\]
	and
	\[E_{1,\varphi} \supset \operatorname{span}_{F}\{e_{k+1}, \ldots, e_{k+t}, \; e_{n} - e_{1} \cdots e_{k} e_{k+1} \cdots e_{k+t} e_{n} \mid n > k+t\}.\]

	We refer to this procedure as \textbf{method~B} for constructing Grassmann superalgebras.  
	When $k$ is even, the automorphism $\varphi$ is not of canonical type.  
	Nevertheless, as we shall see below, the resulting superalgebras are isomorphic to homogeneous ones.  
	
	\begin{example}
		\label{prop_minus}
		Let $\varphi \colon E \to E$ be defined by:
		\[
		\varphi(e_{n}) =
		\begin{cases}
			-e_{1}, & \text{if } n = 1, \\[2pt]
			-e_{n} + 2e_{1}e_{n}, & \text{if } n > 1.
		\end{cases}
		\]
		Then $\varphi$ is an automorphism of order~2.  
		Let $E_{\varphi} = E_{0,\varphi} \oplus E_{1,\varphi}$ be the induced superalgebra.  
		Its homogeneous components are:
		
		\[E_{1,\varphi} \supset \operatorname{span}_{F}\{e_{1}, \; e_{n} - e_{1}e_{n} \mid n > 1\}\]
		and
		\[E_{0,\varphi} = Z(E).\]
		In this case, only one generator of $E$, namely $e_{1}$, is homogeneous.  
		Moreover, $\varphi$ is not of canonical type.  
		However, it is possible to show that $E_{\varphi}$ is isomorphic to the natural $\mathbb{Z}_{2}$-grading $E_{can}$.
	\end{example}

	\subsection{Concrete automorphisms of type 4}

	In order to provide an example of an automorphism of type 4, we start with the following definition.
	
	\begin{definition}\label{def epsilon}
		Let
		\[
		I = \left\{ 2n \in \mathbb{N},\ n > 1 \ \middle|\ \exists\, m \in \mathbb{N} \ \text{such that} \ 2^{m} \leq n < 2^{m} + 2^{m-1} \right\}.
		\]
		We define the sequence $\{\epsilon_i\}_{i \in \mathbb{N}}$ by
		\[
		\begin{aligned}
			&\epsilon_{2n-1} = -1, && \text{if } 2n \in I,\\
			&\epsilon_{2n-1} = 1, && \text{if } 2n \in \mathbb{N} \setminus I,\\
			&\epsilon_{2n} = 1, && \text{for all } n \in \mathbb{N}.
		\end{aligned}
		\]
	\end{definition}
	
	\begin{remark}\label{ex epsilon}
		According to Definition~\ref{def epsilon}, we have:
		\[
		\epsilon_{1} = 1,\quad \epsilon_{2} = 1,\quad \epsilon_{3} = -1,\quad \epsilon_{4} = 1,\quad \epsilon_{5} = 1,\quad \epsilon_{6} = 1,\quad \epsilon_{7} = -1, \ \text{and so on}.
		\]
	\end{remark}

	\begin{lemma}[\cite{anagomes}, Lemma~13]\label{lemma epsilon}
		For every natural number $n$, we have
		\[
		\epsilon_{1} \cdots \epsilon_{2n+1} \;=\; -\epsilon_{n}.
		\]
	\end{lemma}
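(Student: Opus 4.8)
The plan is to argue by induction on $n$, using that the product only involves the odd-indexed signs. Since $\epsilon_{2j}=1$ for all $j$ by Definition~\ref{def epsilon}, setting $P(n)=\epsilon_{1}\cdots\epsilon_{2n+1}$ we may drop the even-indexed factors and obtain the recursion
\[
P(n)=P(n-1)\,\epsilon_{2n}\,\epsilon_{2n+1}=P(n-1)\,\epsilon_{2n+1}.
\]
The base case $n=1$ is immediate from Remark~\ref{ex epsilon}: $P(1)=\epsilon_{1}\epsilon_{2}\epsilon_{3}=1\cdot1\cdot(-1)=-1=-\epsilon_{1}$.

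For the inductive step I assume $P(n-1)=-\epsilon_{n-1}$, so that $P(n)=-\epsilon_{n-1}\epsilon_{2n+1}$; the goal $P(n)=-\epsilon_{n}$ is therefore equivalent to the identity
\[
\epsilon_{2n+1}=\epsilon_{n}\,\epsilon_{n-1}.
\]
To handle this I would split on the parity of $n$. Exactly one of $n,n-1$ is even and contributes the factor $1$, so $\epsilon_{n}\epsilon_{n-1}=\epsilon_{2k-1}$ with $k=\lceil n/2\rceil$; on the other side $\epsilon_{2n+1}=\epsilon_{2(n+1)-1}$ and $\lfloor(n+1)/2\rfloor=k$ in either parity. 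Writing $\sigma(\ell)=\epsilon_{2\ell-1}$ for the sign attached to an odd index, the identity becomes the \emph{halving invariance}
\[
\sigma(\ell)=\sigma(\lfloor \ell/2\rfloor),\qquad \ell=n+1.
\]

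The heart of the argument, and the main obstacle, is proving this halving invariance from the arithmetic of Definition~\ref{def epsilon}. By definition $\sigma(\ell)=-1$ precisely when $2\ell\in I$, that is, when $\ell$ lies in the lower half $[2^{m},3\cdot 2^{m-1})$ of some dyadic block $[2^{m},2^{m+1})$. The key estimate is that for $\ell\ge 2$ the maps $\ell\mapsto 2\ell$ and $\ell\mapsto 2\ell+1$ preserve the ``half'' occupied: if $2^{m}\le \ell<3\cdot 2^{m-1}$ then $2^{m+1}\le 2\ell\le 2\ell+1<3\cdot 2^{m}$, while if $3\cdot 2^{m-1}\le \ell<2^{m+1}$ then $3\cdot 2^{m}\le 2\ell\le 2\ell+1<2^{m+2}$; here the parity of the endpoints is used to pass from the strict inequality for $2\ell$ to one for $2\ell+1$. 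Since $\sigma$ is $-1$ exactly on lower halves, this gives $\sigma(2\ell)=\sigma(2\ell+1)=\sigma(\ell)$ for all $\ell\ge 2$, hence $\sigma(\ell)=\sigma(\lfloor\ell/2\rfloor)$ whenever $\lfloor\ell/2\rfloor\ge 2$, i.e.\ for $\ell\ge 4$.

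The remaining care is with the small indices $\ell\in\{2,3\}$, where $\lfloor\ell/2\rfloor=1$. Invariance genuinely fails at $\ell=2$, since $\sigma(2)=\epsilon_{3}=-1$ but $\sigma(1)=\epsilon_{1}=1$; I would therefore confirm that the inductive step never invokes this value, which holds because there $\ell=n+1\ge 3$ for all $n\ge 2$. The single boundary value $\ell=3$ (arising at $n=2$) I would check directly, with $\sigma(3)=\epsilon_{5}=1=\epsilon_{1}=\sigma(1)$. With these verifications the inductive step closes and the lemma follows.
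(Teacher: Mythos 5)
Your proof is correct. Note that the paper itself does not actually prove Lemma~\ref{lemma epsilon}: its ``proof'' is only a pointer to \cite[Lemma~13]{anagomes}, so there is no in-paper argument to compare against, and what you have written is a genuine self-contained proof. Your reduction of the inductive step to the identity $\epsilon_{2n+1}=\epsilon_{n}\epsilon_{n-1}$ is right, as is its translation (using that exactly one of $n,n-1$ is even) into the halving invariance $\sigma(\ell)=\sigma(\lfloor\ell/2\rfloor)$ at $\ell=n+1$, where $\sigma(\ell)=\epsilon_{2\ell-1}$. The dyadic-block computation also checks out: for $m\ge 1$ the midpoint $3\cdot 2^{m-1}$ of $[2^{m},2^{m+1})$ doubles to the even number $3\cdot 2^{m}$, so $2\ell<3\cdot 2^{m}$ forces $2\ell+1<3\cdot 2^{m}$ as well, and likewise at the upper endpoint $2^{m+2}$; hence $2\ell$ and $2\ell+1$ occupy the same half of the next block as $\ell$ does of its own, giving $\sigma(2\ell)=\sigma(2\ell+1)=\sigma(\ell)$ for $\ell\ge 2$ and the invariance for $\ell\ge 4$. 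Your handling of the small indices is exactly where care is needed: the invariance genuinely fails at $\ell=2$ (since $\sigma(2)=\epsilon_{3}=-1$ while $\sigma(1)=\epsilon_{1}=1$), but that value corresponds to $n=1$, which is the base case, and the single remaining boundary value $\ell=3$ (i.e.\ $n=2$, where $\sigma(3)=\epsilon_{5}=1=\sigma(1)$) is verified directly. The induction therefore closes and the argument is complete.
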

	
	\begin{proof}
		See \cite[ Lemma~13]{anagomes} for details.
	\end{proof}

	Let $w_{n} = e_{1}e_{2} \cdots e_{2n+1}$.  Next, we construct an automorphism of type 4 of the Grassmann algebra $E$.  
	We extend the map $\varphi \colon X \to E$ defined by
	\[
	\varphi(e_{i}) = \epsilon_{i} e_{i} + w_{i},
	\]
	to a linear transformation of $E$. For simplicity, we continue to denote this extension by $\varphi$.

	The following theorem is the main result of this section.
	
	\begin{theorem}[\cite{anagomes}, Theorem 14]
		The linear transformation $\varphi$ defined above is an automorphism of type 4 of the Grassmann algebra.
	\end{theorem}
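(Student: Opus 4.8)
The plan is to verify the three defining properties of a type-4 automorphism in turn: that $\varphi$ is an endomorphism, that $\varphi^2 = \mathrm{id}$ (so that $\varphi$ is an automorphism of order two), and that $\varphi$ satisfies the eigenvector criterion for type~4.

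First I would establish that $\varphi$ is an endomorphism using Lemma~\ref{linear trans for end}. The point is that each image $\varphi(e_i) = \epsilon_i e_i + w_i$ is a sum of two monomials of odd length, namely $1$ and $2i+1$, and hence lies in the odd component $E_{(1)}$. Since any two odd elements $a, b$ of the Grassmann algebra anticommute, i.e. $ab + ba = 0$, we obtain $\varphi(e_i)\varphi(e_j) + \varphi(e_j)\varphi(e_i) = 0$ for all $i, j$, and Lemma~\ref{linear trans for end} applies directly.

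The main work is showing $\varphi^2(e_i) = e_i$. Expanding and using $\epsilon_i^2 = 1$ gives $\varphi^2(e_i) = \epsilon_i\varphi(e_i) + \varphi(w_i) = e_i + \epsilon_i w_i + \varphi(w_i)$, so everything reduces to computing $\varphi(w_i)$. Since $\varphi$ is multiplicative, $\varphi(w_i) = \varphi(e_1)\cdots\varphi(e_{2i+1}) = \prod_{k=1}^{2i+1}(\epsilon_k e_k + w_k)$. The key observation, which I expect to be the crux of the argument, is that in this expansion every term involving at least one factor $w_k$ vanishes: each $w_k = e_1\cdots e_{2k+1}$ contains $e_1, e_2, e_3$, so it necessarily shares a generator with the remaining factors, producing a repeated $e_j$ and hence a zero monomial. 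Only the term $\bigl(\prod_{k=1}^{2i+1}\epsilon_k\bigr)e_1\cdots e_{2i+1} = \bigl(\prod_{k=1}^{2i+1}\epsilon_k\bigr)w_i$ survives. Invoking Lemma~\ref{lemma epsilon}, which gives $\prod_{k=1}^{2i+1}\epsilon_k = -\epsilon_i$, we conclude $\varphi(w_i) = -\epsilon_i w_i$, and substituting back yields $\varphi^2(e_i) = e_i + \epsilon_i w_i - \epsilon_i w_i = e_i$. As $\varphi$ is then an involutive endomorphism, it is an automorphism of order two.

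Finally, to prove that $\varphi$ is of type~4, I would use the criterion of Lemma~\ref{v=0}: it suffices to show that the only $v \in L$ with $\varphi(v) = \pm v$ is $v = 0$. Writing $v = \sum_i c_i e_i$, one has $\varphi(v) = \sum_i c_i \epsilon_i e_i + \sum_i c_i w_i$. Since the monomials $w_i$ have pairwise distinct lengths $2i+1 \geq 3$, they are linearly independent from one another and from all the generators $e_j$. Hence the relation $\varphi(v) = \pm v$ forces the higher-degree part $\sum_i c_i w_i$ to vanish, which gives $c_i = 0$ for all $i$, that is $v = 0$. By Lemma~\ref{v=0}, $\varphi$ is of type~4, completing the proof.
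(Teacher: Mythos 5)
Your proposal is correct and follows essentially the same route as the paper: endomorphism via Lemma~\ref{linear trans for end} since $\varphi(e_i)\in E_{(1)}$, the computation $\varphi(w_i)=\epsilon_1\cdots\epsilon_{2i+1}w_i=-\epsilon_i w_i$ via Lemma~\ref{lemma epsilon} to get $\varphi^2=\mathrm{id}$, and linear independence of the $w_i$ together with Lemma~\ref{v=0} for type~4. Your explicit check that all cross terms in $\prod_k(\epsilon_k e_k+w_k)$ vanish is a welcome elaboration of a step the paper leaves implicit, but it is not a different argument.
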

	
	\begin{proof}
		From Lemma \ref{linear trans for end} any linear transformation $\phi$ on $E$ satisfying
		\[
		\phi(e_i)\phi(e_j) + \phi(e_j)\phi(e_i) = 0,
		\]
		for all $i, j \in \mathbb{N}$, can be extended to a unique endomorphism of $E$.  
		Since $\epsilon_{i} e_{i} + w_{i} \in E_{(1)}$, it follows that $\varphi$ can be extended to an endomorphism of $E$.  
		
		Moreover,
		\[
		\varphi^{2}(e_{i}) = \epsilon_{i}(\epsilon_{i} e_{i} + w_{i}) + \varphi(w_{i}).
		\]
		Notice that
		\[
		\varphi(w_{i}) = \varphi(e_{1}) \cdots \varphi(e_{2i+1})
		= \epsilon_{1} \cdots \epsilon_{2i+1} w_{i}
		= -\epsilon_{i} w_{i},
		\]
		where the last equality follows from Lemma~\ref{lemma epsilon}.  
		Thus, $\varphi^{2}(e_{i}) = e_{i}$, and hence $\varphi$ is an automorphism of order $2$.
		
		Finally, we show that $\varphi$ is of type 4.  
		Assume $v \in L$ satisfies $\varphi(v) = \pm v$.  
		Then there exist $\alpha_{1}, \ldots, \alpha_{n} \in F$ such that
		\[
		v = \alpha_{1} e_{i_1} + \cdots + \alpha_{n} e_{i_n}.
		\]
		Applying $\varphi$, we obtain
		\[
		\pm \left( \sum_{k=1}^{n} \alpha_{k} e_{i_k} \right)
		= \sum_{k=1}^{n} \alpha_{k} \left( \epsilon_{i_k} e_{i_k} + w_{i_k} \right).
		\]
		We deduce
		\[
		\sum_{k=1}^{n} \alpha_{k} w_{i_k} = 0.
		\]
		Since the set $\{ w_i \mid i \in \mathbb{N} \}$ is linearly independent, it follows that $\alpha_{1} = \cdots = \alpha_{n} = 0$.  
		From Lemma \ref{v=0}, we conclude that $\varphi$ is of type 4.
	\end{proof}
	
	We call the procedure described above for producing automorphisms of type 4 \textbf{method~C}.

	\subsection{A theorem and a conjecture}

	We conclude this paper by presenting a theorem and a conjecture concerning the superalgebra structures of the Grassmann algebra.
	
	\begin{theorem}
		If $\varphi$ is an automorphism constructed by Method~A, Method~B, or Method~C, then there exists an automorphism $\psi$ of type~1 such that $E_{\varphi}$ and $E_{\psi}$ are isomorphic superalgebras.
	\end{theorem}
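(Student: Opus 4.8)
My plan is to reduce the statement to a conjugacy assertion for involutions. Recall from the duality in Section~\ref{prmilinaries} that if $\theta\in\operatorname{Aut}(E)$ and $\psi=\theta^{-1}\varphi\theta$, then $\varphi=\theta\psi\theta^{-1}$, so $\theta$ sends each eigenspace of $\psi$ onto the eigenspace of $\varphi$ for the same eigenvalue; consequently $\theta$ is a $\mathbb{Z}_2$-graded isomorphism $E_\psi\to E_\varphi$. Hence it will be enough, for $\varphi$ coming from Method~A, B, or~C, to produce an automorphism $\theta$ of $E$ for which $\psi:=\theta^{-1}\varphi\theta$ satisfies $\psi(e_i)=\pm e_i$ for every $i$, i.e.\ $\psi$ is of type~1.

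First I would locate, for each non-homogeneous generator, an eigenvector of $\varphi$ of the shape ``$e_i$ plus a correction in $E_{(1)}$''. For Method~A I would try $v_j=e_j-\tfrac12 d_j$ for $j\in J$ (and $v_i=e_i$ for $i\in I$); since conditions (2)--(3) give $\varphi(d_j)=d_j$, one obtains $\varphi(v_j)=-v_j$. For Method~B the analogous choice $v_n=e_n-\tfrac12 d_n$, with $d_n=\lambda_n e_1\cdots e_{k+t}e_n$, again yields $\varphi(v_n)=-v_n$. For Method~C the right correction is $v_i=e_i+\tfrac{\epsilon_i}{2}w_i$: using $\varphi(w_i)=-\epsilon_i w_i$, which follows from Lemma~\ref{lemma epsilon}, a one-line substitution gives $\varphi(v_i)=\epsilon_i v_i$.

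Next I would let $\theta$ be the endomorphism determined by $\theta(e_i)=v_i$ via~(\ref{estend to a linear trans}). Because each $v_i$ lies in $E_{(1)}$ and any two elements of $E_{(1)}$ anticommute, the hypothesis of Lemma~\ref{linear trans for end} holds for free, so $\theta$ is an endomorphism. To see it is invertible I would write down the endomorphism $\rho$ given by the sign-reversed corrections ($e_j\mapsto e_j+\tfrac12 d_j$, $e_n\mapsto e_n+\tfrac12 d_n$, $e_i\mapsto e_i-\tfrac{\epsilon_i}{2}w_i$, respectively) and verify $\theta\rho=\rho\theta=\operatorname{id}$ on generators, so $\theta\in\operatorname{Aut}(E)$. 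Finally, since $\theta(e_i)=v_i$ is a $\varphi$-eigenvector with eigenvalue $t_i\in\{\pm1\}$, I get $\psi(e_i)=\theta^{-1}\varphi(v_i)=t_i\,\theta^{-1}(v_i)=t_i e_i$, so $\psi$ is homogeneous and the reduction is complete.

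Methods~A and~B should be routine: there the correction $d_j$ is built only from generators that $\varphi$ scales, so $\varphi(d_j)=d_j$ and $\theta(d_j)=d_j$ are immediate and the verifications above go through directly. The hard part will be Method~C, where $\varphi$ is of type~4 and the corrections $w_i=e_1\cdots e_{2i+1}$ overlap and each contains $e_i$ itself, so the invertibility of $\theta$ is not clear a priori. The crucial identity I will need is $\theta(w_i)=w_i$: expanding $\theta(w_i)=v_1\cdots v_{2i+1}$, every term that selects at least one correction factor $w_j$ acquires a repeated generator---each $w_j$ already contains $e_1$---and therefore vanishes. This forces the correction in $\rho$ to square to zero, yielding $\theta^{-1}(e_i)=e_i-\tfrac{\epsilon_i}{2}w_i$ and confirming that $\theta$ is a genuine automorphism of $E$ even though $\varphi$ is of type~4.
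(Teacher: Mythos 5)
Your argument is correct in its essentials and, notably, is a genuine self-contained proof where the paper offers none: the paper's ``proof'' of this theorem is a bare citation of Propositions~7 and~10 of \cite{agpkauto} and Proposition~15 of \cite{anagomes}. Your reduction to conjugacy of involutions is the right move, the eigenvectors you exhibit are exactly the elements $e_i-a_i$ with $a_i=(e_i+\varphi(e_i))/2$ from the duality discussion in Section~\ref{prmilinaries} (and they expose a typo in the survey, whose displayed odd components for Methods~A and~B omit the factor $\tfrac12$), and the key computation $\theta(w_i)=w_i$ for Method~C is correct --- every mixed term in $v_1\cdots v_{2i+1}$ does vanish, though for the single term that selects $w_1$ in the first slot the repeated generator is $e_2$ rather than $e_1$, so your parenthetical justification should be completed. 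As a bonus, your proof identifies the target homogeneous grading explicitly via the signs $t_i$, which the citation-only proof does not.

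The one genuine soft spot is Method~B with $k$ even --- a case the survey explicitly allows and even highlights (``$\varphi$ is not of canonical type''). There the monomial $e_1\cdots e_{k+t}e_n$ has even length $k+t+1$, so $v_n=e_n-\tfrac12 d_n$ does \emph{not} lie in $E_{(1)}$, and your blanket appeal to ``any two elements of $E_{(1)}$ anticommute'' does not establish the hypothesis of Lemma~\ref{linear trans for end} for $\theta$. A direct check gives $\theta(e_n)\theta(e_{n'})+\theta(e_{n'})\theta(e_n)=\tfrac12(\lambda_{n'}-\lambda_n)\bigl(1-(-1)^{k+t}\bigr)\,e_1\cdots e_{k+t}e_ne_{n'}$, which vanishes under exactly the same condition that makes $\varphi$ itself an endomorphism (for $k$ even this forces all $\lambda_n$ equal, a constraint the survey glosses over); so the fix is simply to note that $\theta$ has the same shape as $\varphi$ with $\lambda_n$ replaced by $-\lambda_n/2$ and inherits the anticommutation relations from $\varphi$. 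Relatedly, your justification that for Method~B ``the correction is built only from generators that $\varphi$ scales'' is inaccurate, since $d_n$ contains $e_n$ itself; the identities $\varphi(d_n)=d_n$ and $\theta(d_n)=d_n$ do hold, but because $(e_1\cdots e_{k+t})^2=0$, not for the reason you give. With these two local repairs the proof is complete.
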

	\begin{proof}
		The claim follows directly from Propositions~7 and~10 in \cite{agpkauto}, together with Proposition~15 in \cite{anagomes}.
	\end{proof}

	The preceding theorem naturally leads to the following conjecture.
	
	\begin{conjectura}
		Every $\mathbb{Z}_{2}$-grading of $E$ is $\mathbb{Z}_{2}$-graded isomorphic to some homogeneous $\mathbb{Z}_{2}$-grading of $E$.
	\end{conjectura}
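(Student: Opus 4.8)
The plan is to reformulate the conjecture as a statement about conjugacy in $\operatorname{Aut}(E)$. By the duality recalled in Section~\ref{prmilinaries}, a $\mathbb{Z}_2$-graded isomorphism $E_{\varphi} \to E_{\psi}$ is the same datum as an algebra automorphism $\Phi \in \operatorname{Aut}(E)$ satisfying $\Phi\varphi = \psi\Phi$; hence the conjecture is equivalent to asserting that every automorphism $\varphi$ of order $\leq 2$ is conjugate in $\operatorname{Aut}(E)$ to a homogeneous (type~1) one. First I would produce the candidate target $\psi$ intrinsically. Since every automorphism fixes $1$ and permutes units, it preserves the augmentation ideal $E^{+}$ (the span of all monomials of positive length) together with its powers; consequently $\varphi$ induces a linear map $\bar\varphi$ on $E^{+}/(E^{+})^{2} \cong L$. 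From $\varphi^{2} = \mathrm{id}$ we get $\bar\varphi^{2} = \mathrm{id}$, so in characteristic different from two $\bar\varphi$ is diagonalizable and $L = L^{+} \oplus L^{-}$ splits into its $\pm 1$ eigenspaces. After replacing $\varphi$ by a conjugate under the homogeneous automorphism induced by a change of basis adapted to this splitting, I may assume $\bar\varphi$ is diagonal, and I let $\psi$ be the homogeneous involution with $\psi(e_i) = \epsilon_i e_i$, where $\epsilon_i = \pm 1$ according to the eigenspace containing $e_i$.

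The next step is to match invariants so as to identify $\psi$ with one of the structures of \cite{disil}. The pair $(\dim_F L^{+}, \dim_F L^{-})$ should single out exactly one homogeneous grading: a finite $\dim_F L^{+}=k$ yields $E_{k}$, a finite $\dim_F L^{-}=k$ yields $E_{k^{\ast}}$, both infinite yields $E_{\infty}$, while the degenerate cases $L^{+}=0$ and $L^{-}=0$ give $E_{can}$ and the trivial grading. A quick consistency check against the worked examples supports this: in Example~\ref{prop_minus} one has $\bar\varphi(e_n)=-e_n$ for all $n$, so $L^{-}=L$ and the predicted target is $E_{can}$, exactly as asserted there; and the type~4 automorphism of Method~C has $\bar\varphi(e_i)=\epsilon_i e_i$ with both eigenspaces infinite, predicting $E_{\infty}$.

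The heart of the argument is the construction of the conjugating automorphism $\Phi$, which I would attempt by successive approximation along the filtration by powers of $E^{+}$. Writing $\varphi(e_i) = \epsilon_i e_i + h_i$ with $h_i \in (E^{+})^{2}$, one considers $\theta = \psi\varphi$, whose induced map on $E^{+}/(E^{+})^{2}$ is the identity, and one seeks $\Phi = \mathrm{id} + (\text{degree-raising corrections})$ so that $\Phi\varphi\Phi^{-1}$ agrees with $\psi$ to ever higher filtration degree. At each stage the correction to be solved for is governed by a linear equation for the action of the order-two group $\langle \psi \rangle$ on the space of degree-$m$ corrections, and the obstruction is a first cohomology class of $\mathbb{Z}_2$ with coefficients in that space; since $\operatorname{char} F \neq 2$, the averaging operator $\tfrac12(\mathrm{id} + \psi(-))$ trivializes this class, so a correction always exists, the relevant cocycle condition being supplied precisely by $\varphi^{2}=\mathrm{id}$.

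The main obstacle I anticipate is not the existence of corrections at each finite stage but the passage to the limit $\Phi = \lim_n \Phi_n$. In the infinite-dimensional Grassmann algebra there is no \emph{a priori} guarantee that the successive corrections stabilize on each fixed generator: a correction that raises filtration degree may nonetheless feed infinitely many new generators into $\Phi(e_i)$, so that the formal limit fails to land in $E$ (whose elements are finite sums) or fails to be bijective. Controlling this—either by showing each generator is modified only finitely often, or by replacing the inductive construction with an explicit global automorphism, such as an exponential of a suitable derivation tailored to the $h_i$—is exactly where the difficulty lies, and is the reason the statement is posed as a conjecture rather than a theorem. This is consistent with the known partial results: Methods~A, B, and~C realize precisely those $\varphi$ for which the higher-order part $h_i$ is controlled enough for the limit to converge, which is why the preceding theorem settles those cases.
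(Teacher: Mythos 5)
The statement you are addressing is a \emph{conjecture}: the paper offers no proof of it, and explicitly records that it was first posed in \cite{anagomes} and remains open. So there is nothing in the paper to compare your argument against, and the only question is whether your proposal actually closes the problem. It does not, and you say so yourself: the entire weight of the argument rests on the passage to the limit $\Phi=\lim_n\Phi_n$ in the successive-approximation step, and you correctly identify that in the infinite-dimensional Grassmann algebra there is no guarantee that the corrections stabilize on each generator, so the formal limit need not define an element of $\operatorname{Aut}(E)$ (elements of $E$ are finite sums, and the filtration by powers of the augmentation ideal $E^{+}$ is not complete in any useful sense here). Everything before that point is a reduction, not a proof; a reduction that terminates in an unresolved convergence problem is exactly the open problem restated in different language.

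That said, the preparatory steps are sound and worth recording. The translation of graded isomorphism into conjugacy of involutions in $\operatorname{Aut}(E)$ is the standard duality the paper recalls in Section~\ref{prmilinaries}; the observation that any automorphism preserves $E^{+}$ (the set of non-units) and hence induces a diagonalizable involution on $E^{+}/(E^{+})^{2}\cong L$ is correct and gives a well-defined candidate homogeneous target $\psi$ among the structures $E_k$, $E_{k^{\ast}}$, $E_{\infty}$, $E_{can}$ of \cite{disil}; and your consistency checks against Example~\ref{prop_minus} and Method~C are right. Be aware, though, of one point you pass over quickly: you assert that the obstruction at each stage is killed by averaging over $\langle\psi\rangle$ because $\operatorname{char}F\neq 2$, but you never verify that the degree-$m$ correction you solve for is itself induced by an automorphism (e.g.\ an exponential of a degree-raising derivation) rather than merely a linear map; in the known partial results \cite{agpkauto, anagomes} this is exactly where the explicit form of the perturbations $h_i$ is used. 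In short: a reasonable plan of attack, consistent with the Theorem covering Methods~A, B, and~C, but not a proof, and the gap you flag is the genuine open content of the conjecture.
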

	
	This conjecture was first stated in \cite{anagomes} and remains open to this day. If it is true, it would provide a classification of the superalgebra structures on $E$. Otherwise, it would imply the existence of some interesting superalgebra structures on $E$ that are not yet known.


\begin{thebibliography}{99}
		
		\bibitem{GMZ} Giambruno, A.; Mishchenko, S.; Zaicev, M. Group actions and asymptotic behavior of graded polynomial identities. \textit{J. Lond. Math. Soc.}, 66 (2002), 259–312.
		
		\bibitem{VPV} Di Vincenzo, O. M.; Koshlukov, P.; Valenti, A. Gradings on the algebra of upper triangular matrices and their graded identities. \textit{J. Algebra}, 275 (2004), 550–566.
		
		\bibitem{Bah} Bahturin, Yu. A.; Zaicev, M. V. Group gradings on matrix algebras. \textit{Can. Math. Bull.}, 45 (2002), 499–508.
		
		\bibitem{popov} Popov, A. P. Identities of the tensor square of a Grassmann algebra. \textit{Algebra Log.}, 21 (1982), 442–471 (in Russian; English translation: Algebra Log., 21 (1982), 296–316).
		
		\bibitem{chiripov} Chiripov, P. Zh.; Siderov, P. N. On the bases of identities of some varieties of associative algebras. \textit{Pliska Stud. Math. Bulgar.}, 2 (1981), 103–115 (in Russian).
		
		\bibitem{malcev only} Malcev, Yu. N. A basis for the identities of the algebra of upper triangular matrices. \textit{Lagerna Log.}, 10 (1971), 393–400 (in Russian; English translation: Algebra Log., 10 (1971), 242–247).
		
		\bibitem{malcev kuzmin} Malcev, Yu. N.; Kuzmin, E. N. A basis for the identities of the algebra of second order matrices over a finite field. \textit{Algebra Log.}, 17 (1978), 28–32 (in Russian; English translation: Algebra Log., 17 (1978), 18–21).
		
		\bibitem{plamen order 2} Koshlukov, P. Basis of the identities of the matrix algebra of order two over a field of characteristic $p=2$. \textit{J. Algebra}, 241 (2001), 410–434.
		
		\bibitem{second-order D} Drensky, V. A minimal basis of identities for a second-order matrix algebra over a field of characteristic 0. \textit{Algebra Log.}, 20 (1980), 282–290 (in Russian; English translation: Algebra Log., 20 (1981), 188–194).
		
		\bibitem{Yu Raz} Razmyslov, Yu. P. Finite basing of the identities of a matrix algebra of second order over a field of characteristic zero. \textit{Algebra Log.}, 12 (1973), 83–113 (in Russian; English translation: Algebra Log., 12 (1973), 47–63).
		
		\bibitem{ACP} Fideles, C.; Guimarães, A.; Koshlukov, P. A note on $\mathbb{Z}$-gradings on the Grassmann algebra and elementary number theory. \textit{Linear and Multilinear Algebra}, Taylor \& Francis, 2023, 1–21.
		
		\bibitem{AGDD} Guimarães, A.; Diniz, D. Gradings on matrix algebras of prime order. \textit{Bull. Braz. Math. Soc., New Series}, 55 (2024), 1–23.
		
		\bibitem{AGuim} Guimarães, A. On the support of $(\mathbb{R},+)$-gradings on the Grassmann algebra. \textit{Commun. Algebra}, 5 (2020), 190–214.
		
		\bibitem{APzgrad} Guimarães, A.; Koshlukov, P. $\mathbb{Z}$-graded polynomial identities of the Grassmann algebra. \textit{Linear Algebra Appl.}, 617 (2021), 190–214.
		
		\bibitem{ABC} Guimarães, A.; Brandão, A.; Fideles, C. $\mathbb{Z}$-gradings of full support on the Grassmann algebra. \textit{J. Algebra}, 601 (2022), 332–353.
		
		\bibitem{agpkauto} Koshlukov, P.; Guimarães, A. Automorphisms and superalgebra structures on the Grassmann algebra. \textit{J. Pure Appl. Algebra}, 227 (2023), 107–166.
		
		\bibitem{AK} Akdoğan, N. An algorithm for automorphisms of infinite dimensional Grassmann algebras. \textit{Creative Math. \& Inform.}, 30 (2021), 2.
		
		\bibitem{anagomes} Fideles, C.; Gomes, A. B.; Grishkov, A.; Guimarães, A. A characterization of the natural grading of the Grassmann algebra and its non-homogeneous $\mathbb{Z}_2$-gradings. \textit{Linear Algebra Appl.}, 680 (2024), 93–107.
		
		\bibitem{BZ} Bahturin, Yu. A.; Zaicev, M. Group gradings on matrix algebras. \textit{Can. Math. Bull.}, 45 (2002), 499–508.
		
		\bibitem{CDD} Centrone, L.; Gonçalves, D. J.; Silva, D. C. Identities and central polynomials with involution for the Grassmann algebra. \textit{J. Algebra}, 560 (2020), 219–240.
		
		\bibitem{centroneG} Centrone, L. The $G$-graded identities of the Grassmann algebra. \textit{Arch. Math. (Brno)}, 52 (2016), 141–158.
		
		\bibitem{centroneP} Centrone, L. $\mathbb{Z}_2$-graded identities of the Grassmann algebra in positive characteristic. \textit{Linear Algebra Appl.}, 435 (2011), 3297–3313.
		
		\bibitem{DGK} Diniz, D.; Galdino, J. L.; Koshlukov, P. Gradings on block-triangular matrix algebras. \textit{Proc. Amer. Math. Soc.}, 152 (2024), 119–127.
		
		\bibitem{disil} Di Vincenzo, O. M.; da Silva, V. R. T. On $\mathbb{Z}_2$-graded polynomial identities of the Grassmann algebra. \textit{Linear Algebra Appl.}, 431 (2009), 56–72.
		
		\bibitem{disilplamen} Di Vincenzo, O. M.; Koshlukov, P.; da Silva, V. R. T. On $\mathbb{Z}_p$-graded identities and cocharacters of the Grassmann algebra. \textit{Comm. Algebra}, 45 (2017), 343–356.
		
		\bibitem{Kemer1} Kemer, A. R. Ideals of identities of associative algebras. \textit{Amer. Math. Soc.}, 1991.
		
		\bibitem{Kemer2} Kemer, A. R. The structure of $T$-ideals in associative algebras. \textit{Russian Math. Surveys}, 41 (1986), 1–60.
		
		\bibitem{KR} Krakowski, D.; Regev, A. The polynomial identities of the Grassmann algebra. \textit{Trans. Amer. Math. Soc.}, 181 (1973), 429–438.
		
		\bibitem{LFG} Gonçalves, L. F. 2-graded identities of the Grassmann algebra over a finite field. \textit{Internat. J. Algebra Comput.}, 28 (2018), 1–17.
		
		\bibitem{latyshev} Latyshev, V. N. On the choice of basis in a T-ideal. \textit{Sib. Matem. Zh.}, 4 (1963), 1122–1126 (in Russian).
		
		\bibitem{makar} Makar-Limanov, L. On Grassmann algebras of graphs. \textit{J. Algebra}, 87 (1984), 283–289.
		
		\bibitem{Rizzo} Rizzo, C. The Grassmann algebra and its differential identities. \textit{Algebr. Represent. Theor.}, 23 (2020), 125–134.
		
		\bibitem{VZ} Valenti, A.; Zaitsev, M. V. Group gradings on upper triangular matrices. \textit{Arch. Math. (Basel)}, 89 (2007), 33–40.
		
	\end{thebibliography}
\end{document}